\documentclass[reqno, 12pt]{amsart}

\usepackage{amsmath,amssymb,amsthm,amsfonts,mathrsfs}
\usepackage{fullpage}
\usepackage{xcolor}
\usepackage{graphicx}
\usepackage{makecell}
\usepackage{listings}
\usepackage{bbm}

\definecolor{codegreen}{rgb}{0,0.6,0}
\definecolor{codegray}{rgb}{0.5,0.5,0.5}
\definecolor{codepurple}{rgb}{0.58,0,0.82}
\definecolor{backcolour}{rgb}{0.95,0.95,0.92}

\lstdefinestyle{mystyle}{
  backgroundcolor=\color{backcolour},   commentstyle=\color{codegreen},
  keywordstyle=\color{magenta},
  numberstyle=\tiny\color{codegray},
  stringstyle=\color{codepurple},
  basicstyle=\ttfamily\footnotesize,
  breakatwhitespace=false,         
  breaklines=true,                 
  captionpos=b,                    
  keepspaces=true,                 
  numbers=left,                    
  numbersep=5pt,                  
  showspaces=false,                
  showstringspaces=false,
  showtabs=false,                  
  tabsize=2
}

\usepackage{listings}
\usepackage[colorlinks=true,
linkcolor=blue,
anchorcolor=blue,
citecolor=violet
]{hyperref}
\allowdisplaybreaks 
\newtheorem{theorem1number}{Theorem}
\newtheorem{theorem}{Theorem}[section]
\newtheorem{lemma}[theorem]{Lemma}

\newtheorem*{conjecture*}{Conjecture}
\newtheorem*{theorem*}{Theorem}
\theoremstyle{definition}

\theoremstyle{remark}

\newtheorem*{remark*}{Remark}

\usepackage{colonequals}

\author{\textbf{Runbo Li}}
\address{Department of Mathematics, Imperial College London, London SW7 2AZ, UK}
\email{runbo.li26@imperial.ac.uk, runbo.li.carey@gmail.com}

\makeatletter
\@namedef{subjclassname@2020}{\textup{}2020 Mathematics Subject Classification}
\makeatother

\title[]{Large value estimates for Dirichlet polynomials and trilinear sums over short intervals}
\subjclass[2020]{\textbf{11M06}, \textbf{11N05}, \textbf{11N35}, \textbf{11N36}}
\keywords{\textbf{Large value estimates}, \textbf{Distribution of primes}, \textbf{Short intervals}}

\begin{document}

\begin{abstract}
Recently, Matom\"{a}ki, Pandey, Pliego, Ter\"{a}v\"{a}inen and Wang \cite{MPPTW} improved the Baker--Harman--Pintz \cite{BHP} estimate of trilinear sums over short intervals of length $x^{\theta}$, which naturally reduces to estimating mean values of products of three Dirichlet polynomials. Based on recent works of Guth and Maynard \cite{GuthMaynard} and Chen, Gupta and Li \cite{ChenGuptaLi} on new large value estimates for Dirichlet polynomials, they improved the Baker--Harman--Pintz estimate for $\theta > \frac{3}{5}$. In this paper, we use their method to improve the Baker--Harman--Pintz estimate for $\frac{1}{2} < \theta < \frac{4}{7}$ and get new results for all $\frac{1}{2} < \theta < 1$. Our result yields improvements on many problems, including the work of Li \cite{LRB052} on primes in short intervals of length $x^{0.52}$.
\end{abstract}

\maketitle

\tableofcontents

\section*{Notation}
Throughout this paper, we always assume that $x$ is a sufficiently large integer, $\varepsilon$ is a sufficiently small positive constant and $K \geqslant 1$. The appearance of $K$ in the exponent of a logarithm will always signify that the result holds for every $K \geqslant 1$ with an implied constant that depends on $K$. We shall ignore the presence of the integer multiples of $\varepsilon$ in many places below. The letter $p$, with or without subscript, is reserved for prime numbers. Let $c$ and $c_i$ ($i \in \mathbb{N}$) denote positive constants which may have different values at different places. We use both Landau and Vinogradov notations, and implied constants may depend on fixed parameters. We write $m \sim M$ to mean that $M \leqslant m < 2M$ and $m \asymp M$ to mean that $c_1 M < m < c_2 M$. Let $\frac{1}{2} < \theta < 1$, $y = x^{\theta + \varepsilon}$, $y_1 = x \exp \left(- 3 (\log x)^{1/3} \right)$, $T = x^{1-\theta}$ and $T_0 = \exp \left((\log x)^{1/3} \right)$. We use $M(s)$, $N(s)$, $R(s)$ and some other capital letters, with or without subscript, to denote some divisor-bounded Dirichlet polynomials
$$
M(s)=\sum_{m \sim M} a_{m} m^{-s}, \quad N(s)=\sum_{n \sim N} b_{n} n^{-s}, \quad R(s)=\sum_{r \sim R} c_{r} r^{-s}.
$$
Note that for a divisor-bounded Dirichlet polynomial $M(s)$, there exists $c_d$ such that for every $d \in \mathbb{N}$,
$$
\sum_{m \leqslant M}|a_m|^{2d} \ll M(\log M)^{c_d}.
$$
We say a Dirichlet polynomial $M(s)$ is \textit{prime-factored} if we have
$$
\left|M \left( \frac{1}{2}+it \right)\right| \ll M^{\frac{1}{2}} (\log x)^{-K}
$$
for $T_0 < |t| < T$. In fact, this holds when $a_m$ is the characteristic function of primes or of numbers with a bounded number of prime factors restricted to certain ranges. For example, if we have
$$
a_m = \sum_{m = p_1 \cdots p_k} 1
$$
and the least prime factor of $m$ is $\gg \exp \left((\log x)^{4/5} \right)$, then we know that $M(s)$ is prime-factored by [\cite{HarmanBOOK}, Lemma 1.5].

\section{Introduction}
In this paper we seek the mean value estimates of products of three Dirichlet polynomials in the following form:
\begin{equation}
\int_{T_0}^{T} \left|M \left( \frac{1}{2}+it \right) N \left( \frac{1}{2}+it \right) R \left( \frac{1}{2}+it \right)\right| \ll x^{\frac{1}{2}} (\log x)^{-K},
\end{equation}
where $M, N, R \gg x^{\varepsilon}$ and $M N R \asymp x$. This type of mean value estimates plays a crucial role in finding primes in short intervals. In fact, by [\cite{BHP}, Lemma 6] or [\cite{HarmanBOOK}, Lemma 7.2], (1) implies the following asymptotic formula for trilinear sums:
\begin{equation}
\sum_{\substack{m n r \in [x - y, x) \\ m \sim M \\ n \sim N}} a_m b_n c_r = \frac{y}{y_1} \sum_{\substack{m n r \in [x - y_1, x) \\ m \sim M \\ n \sim N}} a_m b_n c_r + O\left(y(\log x)^{-K}\right).
\end{equation}
By various asymptotic formulas of the form (2), together with a combinatorial decomposition of prime counting function (usually done by applying Heath-Brown identity or Harman's sieve), one would get an asymptotic formula or a nontrivial lower bound for the number of primes in a short interval $[x - y, x)$.

In 1996, Baker, Harman and Pintz \cite{BHP2} proved (1) under the condition that $R(s)$ is prime-factored and $M, N$ lie in specific regions. Write $M = x^{\alpha_1}$ and $N = x^{\alpha_2}$, then their restrictions on $M, N$ becomes
\begin{equation}
|\alpha_1 - \alpha_2| < 2 \theta - 1, \qquad 1 - \alpha_1 - \alpha_2 < \gamma(\theta),
\end{equation}
where
$$
\gamma(\theta) = \min \left(4 \theta - 2,\ \frac{20 \theta - 9}{11},\ \frac{72 \theta - 37}{11}\right).
$$
Later in 2001, they \cite{BHP} further generalize this result by replacing $\gamma(\theta)$ with a more flexible expression:
$$
\gamma(\theta) = \max_{g \in \mathbb{N}} \gamma_{g}(\theta), \quad \gamma_{g}(\theta) = \min \left(4 \theta - 2,\ \frac{(8 g - 4) \theta - (4 g - 3)}{4 g - 1},\ \frac{24 g \theta - (12 g + 1)}{4 g - 1}\right).
$$
Taking $g = 3$, this reduces to their 1996 result. Using this 2001 result (with $g = 5$) as the main Type-II arithmetic information input to give asymptotic formulas of the form (2), together with an application of Harman's sieve, they showed that there is always a prime number in the short interval $[x - y, x]$, where $\theta = 0.525$. In 2025, Li \cite{LRB052} improved this to $\theta = 0.52$, and he also used this 2001 result (with $g = 6$) to produce asymptotic formulas of the form (2).

The proof of the 2001 Baker--Harman--Pintz result uses large value estimates for Dirichlet polynomials, but only classical bounds from classical mean value estimate and Halász--Montgomery--Huxley large value estimate were available at that time. In 2026, Guth and Maynard \cite{GuthMaynard} and Chen, Gupta and Li \cite{ChenGuptaLi} proved new large value estimates for Dirichlet polynomials, which is better than classical bounds in some cases. Recently, Matom\"{a}ki, Pandey, Pliego, Ter\"{a}v\"{a}inen and Wang \cite{MPPTW} improved the 2001 Baker--Harman--Pintz result for $\theta > \frac{3}{5}$ by developing a new method and inserting those new large value estimates into the proof. In this paper, by applying the method developed in \cite{MPPTW}, we obtain an improvement over the 2001 Baker--Harman--Pintz result for $\frac{1}{2} < \theta < \frac{4}{7}$.
\begin{theorem1number}\label{t1}
Let $\frac{1}{2} < \theta < \frac{4}{7}$. Write $M = x^{\alpha_1}$ and $N = x^{\alpha_2}$. Then (1) holds if $R(s)$ is prime-factored and
$$
|\alpha_1 - \alpha_2| < 2 \theta - 1, \qquad 1 - \alpha_1 - \alpha_2 < \gamma(\theta),
$$
where
$$
\gamma(\theta) = \max\left(\max_{g_1 \in \mathbb{N}} \gamma^{1}_{g_1}(\theta),\ \max_{\substack{g_2 \in \mathbb{N} \\ g_2 \geqslant 5 \\ 2 \theta - 1 \in I_{g_2}}} \gamma^{2}_{g_2}(\theta),\ \gamma^{3}_{g_3}(\theta) \right),
$$
$$
\gamma^{1}_{g_1}(\theta) = \min \left(4 \theta - 2,\ \frac{(8 g_1 - 4) \theta - (4 g_1 - 3)}{4 g_1 - 1},\ \frac{24 g_1 \theta - (12 g_1 + 1)}{4 g_1 - 1}\right),
$$
$$
\gamma^{2}_{g_2}(\theta) = \frac{(4 g_2 - 1)(2 \theta - 1)}{2 g_2 - 1} - \varepsilon, \quad I_{g_2} = \left[\max\left(\frac{1}{4 g_2 + 6},\ \frac{5}{24 g_2 + 5}\right), \quad \frac{2 g_2 - 1}{8 g_2^2 + 4 g_2 - 2}\right],
$$
$$
\gamma^{3}_{g_3}(\theta) = \min \left(\frac{(4 g_3 - 3) \theta - (2 g_3 - 2)}{2 g_3 - 1},\ \frac{5((8 g_3 - 1) \theta - 4 g_3)}{4 g_3 - 5},\ \frac{(4 g_3 - 1)(2 \theta - 1)}{2 g_3 - 1}\right) - \varepsilon,
$$
$$
g_3 = g_3(\theta) = \max\left(2,\ \left\lfloor \frac{5(2 - 3 \theta)}{12(2 \theta - 1)} \right\rfloor + 1 \right).
$$
\end{theorem1number}
\begin{remark*}
The term $\gamma^{1}(\theta)$ is the same as $\gamma(\theta)$ in the 2001 Baker--Harman--Pintz result. When $\theta = 0.52$, the 2001 Baker--Harman--Pintz result $\gamma_{\operatorname{BHP}}(\theta) = \gamma^{1}_{6}(\theta)$ gives $\frac{2}{25} = 0.08$, while our new $\gamma(\theta)$ gives $\frac{19}{225} - \varepsilon \approx 0.0844$, giving a $5.5\%$ improvement.
\end{remark*}

Apart from results with conditions similar to (3), we also get the following result with a different type of conditions.
\begin{theorem1number}\label{t2}
Let $\frac{1}{2} < \theta < 1$. Write $M = x^{\alpha_1}$ and $N = x^{\alpha_2}$. Then (1) holds if $R(s)$ is prime-factored and any of the following 3 conditions holds:
$$
(i) \qquad |\alpha_1 - \alpha_2| < 2 \theta - 1, \qquad \max(\alpha_1, \alpha_2) \geqslant 1 - \theta, \qquad \alpha_1 + \alpha_2 \geqslant \frac{1 + 3 \theta}{4} + \varepsilon;
$$
$$
(ii) \qquad |\alpha_1 - \alpha_2| < 2 \theta - 1, \qquad \frac{5(1 - \theta)}{6} \leqslant \min(\alpha_1, \alpha_2) \leqslant \max(\alpha_1, \alpha_2) \leqslant 1 - \theta,
$$
and, for some $g \in \mathbb{N}$ such that $2 \leqslant g \leqslant \max\left(2, \lceil \frac{1 - \theta}{2 \theta - 1} \rceil\right)$,
$$
\alpha_1 + \alpha_2 > \max \left(\delta^{1,1}_g,\ \min\left(\delta^{1,2}_g,\ \max\left(\delta^{1,3}_g,\ \delta^{1,4}_g \right) \right) \right) + \varepsilon \quad \text{or} \quad \delta^{1,1}_g + \varepsilon < \alpha_1 + \alpha_2 < \delta^{1,5}_g - \varepsilon,
$$
where
\begin{align}
\nonumber \delta^{1,1}_g = \delta^{1,1}_g(\theta) =&\ \frac{(4 g - 3)(1 - \theta) - |\alpha_1 - \alpha_2|}{2 g - 1}, \\
\nonumber \delta^{1,2}_g = \delta^{1,2}_g(\theta) =&\ \frac{8 g - 1 - (12 g - 1)\theta - |\alpha_1 - \alpha_2|}{2 g - 1}, \\
\nonumber \delta^{1,3}_g = \delta^{1,3}_g(\theta) =&\ \frac{24 g - 5 - (40 g - 5)\theta - 5 |\alpha_1 - \alpha_2|}{4 g - 5}, \\
\nonumber \delta^{1,4}_g = \delta^{1,4}_g(\theta) =&\ \frac{6 g - 2 - (8 g - 2)\theta - |\alpha_1 - \alpha_2|}{2 g - 1}, \\
\nonumber \delta^{1,5}_g = \delta^{1,5}_g(\theta) =&\ (4 g - 1) \theta - 2 g + 1 + |\alpha_1 - \alpha_2|;
\end{align}
$$
(iii) \qquad |\alpha_1 - \alpha_2| < 2 \theta - 1, \quad \frac{5(1 - \theta)}{8} \leqslant \min(\alpha_1, \alpha_2) \leqslant \frac{5(1 - \theta)}{6}, \quad \max(\alpha_1, \alpha_2) \leqslant 1 - \theta,
$$
and, for some $g \in \mathbb{N}$ such that $2 \leqslant g \leqslant \max\left(2, \lceil \frac{1 - \theta}{2 \theta - 1} \rceil\right)$,
$$
\alpha_1 + \alpha_2 > \max \left(\delta^{2,1}_g,\ \min\left(\delta^{2,2}_g,\ \max\left(\delta^{2,3}_g,\ \delta^{2,4}_g \right) \right) \right) + \varepsilon \quad \text{or} \quad \delta^{2,1}_g + \varepsilon < \alpha_1 + \alpha_2 < \delta^{2,5}_g - \varepsilon,
$$
where
\begin{align}
\nonumber \delta^{2,1}_g = \delta^{2,1}_g(\theta) =&\ \frac{(10 g - 5)(1 - \theta) - |\alpha_1 - \alpha_2|}{5 g - 1}, \\
\nonumber \delta^{2,2}_g = \delta^{2,2}_g(\theta) =&\ \frac{10 g(2 - 3 \theta) - |\alpha_1 - \alpha_2|}{5 g - 1}, \\
\nonumber \delta^{2,3}_g = \delta^{2,3}_g(\theta) =&\ \frac{4 g (3 - 5 \theta) - |\alpha_1 - \alpha_2|}{2 g - 1}, \\
\nonumber \delta^{2,4}_g = \delta^{2,4}_g(\theta) =&\ \frac{30 g - 5 - (40 g - 5) \theta - 2 |\alpha_1 - \alpha_2|}{2(5 g - 1)}, \\
\nonumber \delta^{2,5}_g = \delta^{2,5}_g(\theta) =&\ 5 g (2 \theta - 1) + |\alpha_1 - \alpha_2|.
\end{align}
\end{theorem1number}
There are many interesting applications of our Theorem~\ref{t1} and Theorem~\ref{t2}. For example, by replacing the function $\gamma_{\operatorname{BHP}}(\theta)$ used in \cite{LRB052} with our new $\gamma(\theta)$ in Theorem~\ref{t1}, we can improve the exponent $\theta = 0.52$ concerning primes in short intervals obtained in \cite{LRB052}. The upper and lower bounds for the functions $\mathbf{LB}(\theta)$ and $\mathbf{UB}(\theta)$ defined in \cite{LRB052} can also be improved.
\begin{theorem1number}\label{t3}
There exists a computable number $c > 0$ such that, for all sufficiently large $x$, the interval $[x - x^{0.52 - c}, x]$ contains prime numbers.
\end{theorem1number}

\section{The Matom\"{a}ki--Pandey--Pliego--Ter\"{a}v\"{a}inen--Wang Method}
In this section we briefly introduce the new method of obtaining (1) developed by Matom\"{a}ki, Pandey, Pliego, Ter\"{a}v\"{a}inen and Wang \cite{MPPTW} and state some results we need in the proof of Theorem~\ref{t1} and Theorem~\ref{t2}.
Let $\mathcal{T} \subseteq [T, 2T]$. Define $\mathcal{N}_M(\nu; \mathcal{T})$ as the maximal cardinality of a $1$-separated subset $\mathcal{U} \subseteq \mathcal{T}$ such that
$$
\left|M \left( \frac{1}{2}+it \right)\right| \geqslant M^{\frac{1}{2} - \nu} \quad \text{for all } t \in \mathcal{U}.
$$
By applying the classical mean value estimate [\cite{IwaniecKowalski}, Theorem 9.7], Halász--Montgomery--Huxley large value estimate [\cite{IwaniecKowalski}, Corollary 9.9], Guth--Maynard large value estimate [\cite{GuthMaynard}, Theorem 1.1] and Chen--Gupta--Li large value estimate [\cite{ChenGuptaLi}, Theorem 1.1] to $M^k(s)$, Matom\"{a}ki, Pandey, Pliego, Ter\"{a}v\"{a}inen and Wang [\cite{MPPTW}, Lemma 2.3(i)] obtained the following upper bound for $\mathcal{N}_M(\nu; \mathcal{T})$ that holds for every fixed $k \in \mathbb{N}$:
\begin{equation}
\mathcal{N}_M(\nu; \mathcal{T}) \ll x^{\mathcal{K}(k \alpha, \nu; \theta) + o(1)},
\end{equation}
where
\begin{equation}
\mathcal{K}(\alpha, \nu; \theta) = \min\left(1 - \theta,\ \max\left(2 \alpha \nu,\ \mathcal{K}_{0}(\alpha, \nu; \theta) \right) \right),
\end{equation}
\begin{align}
\nonumber \mathcal{K}_{0}(\alpha, \nu; \theta) =&\ \min\left(1 - \theta + \alpha (6 \nu - 2),\ 1 - \theta + \alpha (2 \nu - 1), \right. \\
& \left. \qquad \qquad \max\left(1 - \theta + \alpha \left(4 \nu - \frac{8}{5}\right),\ \frac{1 - \theta}{2} + \alpha (4 \nu - 1) \right) \right).
\end{align}

\begin{lemma}\label{MPPTW_Proposition3.2}
Let $M_j = x^{\alpha_j}$. Suppose that $\alpha_1 + \alpha_2 + \alpha_3 = 1$ and $\alpha_1, \alpha_2, \alpha_3 \geqslant \varepsilon$. Suppose also that one of $M_1(s)$, $M_2(s)$ and $M_3(s)$ is prime-factored. For $j = 1, 2, 3$, define the function $P_j : [0, 1 - \theta] \to [0, \infty]$ as
$$
P_j(t) = \inf \left\{\alpha_j \nu : \nu \in [\varepsilon^{6}, \varepsilon^{-1}], \min_{1 \leqslant k \leqslant \lceil \varepsilon^{-7} \rceil} \mathcal{K}(k \alpha_j, \nu; \theta) \geqslant t \right\}.
$$
If we have
$$
P_1(t) + P_2(t) + P_3(t) > t + \varepsilon^{10}
$$
for every $t \in [0, 1 - \theta]$, then we have
$$
\int_{\mathcal{T}} \left|M_1 \left( \frac{1}{2}+it \right) M_2 \left( \frac{1}{2}+it \right) M_3 \left( \frac{1}{2}+it \right)\right| \ll x^{\frac{1}{2}} (\log x)^{-K}.
$$
\end{lemma}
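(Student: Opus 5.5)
This is essentially [\cite{MPPTW}, Proposition 3.2], and I would prove it by dyadically decomposing according to the sizes of the three polynomials and counting each level set with (4). The only inputs are the pointwise bound coming from the prime-factored hypothesis and the large value bound (4) applied to each $M_j$ with every $k\le\lceil\varepsilon^{-7}\rceil$.

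\textbf{Step 1: reduce to a discrete sum.} Split $\mathcal{T}$ into unit intervals and choose points $t$ nearly maximizing $|M_1M_2M_3(\frac12+it)|$ in each. A standard Gallagher/Sobolev-type argument replaces the integral by a sum over a $1$-separated set $\mathcal{U}$, at the cost of $x^{o(1)}$ factors or a harmless smoothing. Write $|M_j(\frac12+it)| = M_j^{1/2-\nu_j(t)}$.

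\textbf{Step 2: truncate the ranges of the $\nu_j$.}
\begin{itemize}
\item Points with some $\nu_j > \varepsilon^{-1}$ contribute at most $x^{1-\theta}\cdot x^{1/2}\cdot x^{-\alpha_j\varepsilon^{-1}}$. This is negligible provided $\alpha_j\varepsilon^{-1}$ exceeds $1-\theta$, which holds because $\alpha_j\ge\varepsilon$ and $\varepsilon$ is small. If needed, one loses $\varepsilon^2$ more in the bookkeeping.
\item Points where some $\nu_j<\varepsilon^6$: set $\nu_j=\varepsilon^6$, so only the bound $|M_j|\le M_j^{1/2-\varepsilon^6}$ (or the trivial bound) is used. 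Since $P_j$ is defined with infimum over $\nu\ge\varepsilon^6$, the condition still accounts for this.
\end{itemize}
Then pigeonhole $(\nu_1,\nu_2,\nu_3)$ into boxes of side $\varepsilon^{20}$; there are $O_\varepsilon(1)$ boxes. Let $\mathcal{U}_{\boldsymbol\nu}$ be the points in one box, with $|\mathcal{U}_{\boldsymbol\nu}|=x^{t}$, where $t\in[0,1-\theta]$ since $|\mathcal{U}|\ll T$.

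\textbf{Step 3: apply (4) to each level set.} Each $t\in\mathcal{U}_{\boldsymbol\nu}$ lies in the large value set of $M_j$ at level $\nu_j$. By (4) applied with every $k\le\lceil\varepsilon^{-7}\rceil$,
$$
t\le \min_k \mathcal{K}(k\alpha_j,\nu_j;\theta)+o(1).
$$
I want to deduce $\alpha_j\nu_j\ge P_j(t)-O(\varepsilon^{11})$. This needs monotonicity of $\mathcal{K}$ in $\nu$: it is nondecreasing, since each branch of (5)–(6) is nondecreasing in $\nu$. So if $\alpha_j\nu_j<P_j(t)-\eta$, then every $\nu'\le\nu_j+\eta/\alpha_j$ has $\min_k\mathcal{K}<t$. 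This is slightly circular at the boundary, and I would handle it by applying the definition at $t-\varepsilon^{11}$ and using the hypothesis with its $\varepsilon^{10}$ margin. The contribution of the box is then
$$
\ll x^{t}\, x^{1/2}\, x^{-(\alpha_1\nu_1+\alpha_2\nu_2+\alpha_3\nu_3)} \le x^{1/2} x^{t-\sum_j P_j(t)+O(\varepsilon^{11})} \le x^{1/2-\varepsilon^{10}/2},
$$
and summing over the $O_\varepsilon(1)$ boxes is fine.

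\textbf{Step 4: the region near $T_0$.} Small $t$-heights and the $(\log x)^{-K}$ saving are the delicate part: when all $\nu_j$ are tiny, $t$ is near $0$, and the power saving disappears. Here the prime-factored polynomial, say $M_3$, gives $|M_3|\ll M_3^{1/2}(\log x)^{-K}$. For the remaining pieces I would use the mean value theorem: Cauchy–Schwarz on $M_1M_2$ over $\mathcal{U}$ with $|\mathcal{U}|\le x^{\varepsilon^{10}/2}$ gives $\ll x^{1/2}(\log x)^{-K}$. When $|\mathcal{U}|=x^t$ with $t\ge\varepsilon^{10}/2$, the condition at $t$ gives the power saving of Step 3.

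The main obstacle I expect is this transition: ensuring that the low-$t$ regime, where the hypothesis $P_1+P_2+P_3>t$ gives only an $\varepsilon^{10}$ saving, indeed combines with the logarithmic saving from prime-factoredness. This requires that the mean value bound $\sum_{t\in\mathcal{U}}|M_1M_2|^2\ll (|\mathcal{U}|+M_1M_2)M_1M_2\,x^{o(1)}$ loses no powers. It does not, since $M_1M_2\ge x^{\varepsilon}\ge|\mathcal{U}|$ there, up to logarithms that are absorbed by the choice of $K$.
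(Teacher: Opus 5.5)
Your Steps 1--3 are fine for level sets on which every $\nu_j$ lies in $[\varepsilon^6,\varepsilon^{-1}]$. There is a genuine gap for level sets on which some $\nu_j<\varepsilon^6$.

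\textbf{The sign error in Step 2.} You ``set $\nu_j=\varepsilon^6$'' and use $|M_j|\le M_j^{1/2-\varepsilon^6}$, but the inequality runs the other way: $\nu_j<\varepsilon^6$ means $|M_j|>M_j^{1/2-\varepsilon^6}$. Only the trivial bound is available, so $M_j$ saves $\alpha_j\nu_j$, which may be about $0$. Meanwhile $P_j(t)\ge\alpha_j\varepsilon^6\ge\varepsilon^7$ because of the floor. That discrepancy swamps the margin $\varepsilon^{10}$, so $\sum_j\alpha_j\nu_j\ge\sum_jP_j(t)-O(\varepsilon^{11})$ is false for these boxes. The floor does not ``account for'' these points; it makes the hypothesis vacuous there. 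As the paper notes right after the lemma, $\sum_jP_j(t)>t+\varepsilon^{10}$ holds automatically for $t\le\varepsilon^6/2$. By (4) with $k\alpha_j\ge1-\theta$, such level sets have size $x^{t}$ with $t=O(\varepsilon^6)$. So this is exactly the small-$t$ regime, and it must be handled without the hypothesis.

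\textbf{Step 4 does not close it.} The prime-factored factor saves only $(\log x)^{-K}$ per point, so it can absorb at most polylogarithmically many points. Cauchy--Schwarz gives at best $\max_{\mathcal U}|M_3|\cdot|\mathcal U|^{1/2}(\sum_{\mathcal U}|M_1M_2|^2)^{1/2}\ll x^{1/2}(\log x)^{-K+c}|\mathcal U|^{1/2}$. The discrete mean value theorem has $T+M_1M_2$, not $|\mathcal U|+M_1M_2$, and even the latter would not remove the factor $|\mathcal U|^{1/2}$. For $|\mathcal U|$ as large as $x^{\varepsilon^{10}/2}$ this is a power loss. The range $x^{\varepsilon^{10}/2}\le|\mathcal U|\le x^{O(\varepsilon^6)}$ with some $\nu_j<\varepsilon^6$ is covered by neither step. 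Also, the $x^{o(1)}$ losses you allow (in (4) and in Step 1) are incompatible with a $(\log x)^{-K}$ conclusion once the power saving degenerates.

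Part of this can be repaired. The mean value theorem for $M_j^k$ with $k\alpha_j\ge1-\theta$ shows that points with $|M_j|\ge M_j^{1/2}(\log x)^{-A}$ number at most $(\log x)^{O_\varepsilon(A)}$. Those points are absorbed by the prime-factored bound with a larger $K$. The intermediate range still needs a separate power-saving argument, in which the stated hypothesis plays no direct role.

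This is precisely what is handled inside the proof of [\cite{MPPTW}, Proposition 3.2]. The paper does not reprove it. It applies that proposition with $J=3$, $\tau=1-\theta$ and $H_j(\nu)=\min_{1\le k\le\lceil\varepsilon^{-7}\rceil}\mathcal K(k\alpha_j,\nu;\theta)$. The only thing to check is that $H_j$ is an admissible large-value bound for $M_j$, which is (4) for each of the finitely many $k$. You should either do the same or supply the missing argument for these near-trivial level sets.
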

\begin{proof}
This is [\cite{MPPTW}, Proposition 3.2] with $J = 3$, $\tau = 1 - \theta$ and 
$$
H_j(\nu) = \min_{1 \leqslant k \leqslant \lceil \varepsilon^{-7} \rceil} \mathcal{K}(k \alpha_j, \nu; \theta).
$$
\end{proof}

By a standard dyadic partition together with Lemma~\ref{MPPTW_Proposition3.2}, we can reduce the problem of proving (1) to the task of showing a piecewise-linear function $P_1(t) + P_2(t) + P_3(t) - t - \varepsilon^{10}$ is positive. In order to give a lower bound for $P_j(t)$, we define the following auxiliary function $\mathcal{R}_{\alpha}$ as
\begin{equation}
\mathcal{R}_{\alpha}(t) = \alpha \inf \left\{\nu : \nu \geqslant 0,\ \mathcal{K}(\alpha, \nu; \theta) \geqslant t \right\}.
\end{equation}
Since $\mathcal{K}(\alpha, \nu; \theta)$ has an explicit expression, we can also write $\mathcal{R}_{\alpha}(t)$ explicitly as
\begin{align}
\nonumber \mathcal{R}_{\alpha}(t) =&\ \max\left(0,\ \min\left(\frac{t}{2},\ \max\left(\frac{t - (1 - \theta) + 2 \alpha}{6},\ \frac{t - (1 - \theta) + \alpha}{2}, \right. \right. \right. \\
& \qquad \qquad \qquad \qquad \qquad \qquad \left. \left. \left. \min\left(\frac{t - (1 - \theta) + \frac{8}{5} \alpha}{4},\ \frac{t - \frac{1 - \theta}{2} + \alpha}{4} \right) \right) \right) \right).
\end{align}
Moreover, we have
\begin{align}
\mathcal{R}_{\alpha}(t) =
\begin{cases}
0, & \quad \varepsilon \leqslant \alpha \leqslant \frac{1 - \theta}{2} \text{ and } 0 \leqslant t \leqslant 1 - \theta - 2 \alpha, \\
\frac{t - (1 - \theta) + 2 \alpha}{6}, & \quad \varepsilon \leqslant \alpha \leqslant \frac{1 - \theta}{2} \text{ and } 1 - \theta - 2 \alpha \leqslant t \leqslant 1 - \theta - \frac{4}{5} \alpha, \\
\frac{t - (1 - \theta) + \frac{8}{5} \alpha}{4}, & \quad \varepsilon \leqslant \alpha \leqslant \frac{1 - \theta}{2} \text{ and } 1 - \theta - \frac{4}{5} \alpha \leqslant t \leqslant 1 - \theta - \frac{2}{5} \alpha, \\
\frac{t - (1 - \theta) + \alpha}{2}, & \quad \varepsilon \leqslant \alpha \leqslant \frac{1 - \theta}{2} \text{ and } 1 - \theta - \frac{2}{5} \alpha \leqslant t \leqslant 1 - \theta, \\
\frac{t}{2}, & \quad \frac{1 - \theta}{2} \leqslant \alpha \leqslant \frac{5(1 - \theta)}{6} \text{ and } 0 \leqslant t \leqslant \alpha - \frac{1 - \theta}{2}, \\
\frac{t - (1 - \theta) + 2 \alpha}{6}, & \quad \frac{1 - \theta}{2} \leqslant \alpha \leqslant \frac{5(1 - \theta)}{6} \text{ and } \alpha - \frac{1 - \theta}{2} \leqslant t \leqslant 1 - \theta - \frac{4}{5} \alpha, \\
\frac{t - (1 - \theta) + \frac{8}{5} \alpha}{4}, & \quad \frac{1 - \theta}{2} \leqslant \alpha \leqslant \frac{5(1 - \theta)}{6} \text{ and } 1 - \theta - \frac{4}{5} \alpha \leqslant t \leqslant 1 - \theta - \frac{2}{5} \alpha, \\
\frac{t - (1 - \theta) + \alpha}{2}, & \quad \frac{1 - \theta}{2} \leqslant \alpha \leqslant \frac{5(1 - \theta)}{6} \text{ and } 1 - \theta - \frac{2}{5} \alpha \leqslant t \leqslant 1 - \theta, \\
\frac{t}{2}, & \quad \frac{5(1 - \theta)}{6} \leqslant \alpha \leqslant 1 - \theta \text{ and } 0 \leqslant t \leqslant \alpha - \frac{1 - \theta}{2}, \\
\frac{t - \frac{1 - \theta}{2} + \alpha}{4}, & \quad \frac{5(1 - \theta)}{6} \leqslant \alpha \leqslant 1 - \theta \text{ and } \alpha - \frac{1 - \theta}{2} \leqslant t \leqslant \frac{3(1 - \theta)}{2} - \alpha, \\
\frac{t - (1 - \theta) + \alpha}{2}, & \quad \frac{5(1 - \theta)}{6} \leqslant \alpha \leqslant 1 - \theta \text{ and } \frac{3(1 - \theta)}{2} - \alpha \leqslant t \leqslant 1 - \theta, \\
\frac{t}{2}, & \quad \alpha > 1 - \theta. \\
\end{cases}
\end{align}
Now, by the definitions of $P_j(t)$ and $\mathcal{R}_{\alpha}(t)$, we can easily deduce that
\begin{equation}
P_j(t) = \max_{1 \leqslant k \leqslant \lceil \varepsilon^{-7} \rceil} \left(\frac{1}{k} \mathcal{R}_{k \alpha_j}(t) \right).
\end{equation}
Thus, we can use $\mathcal{R}_{\alpha}(t)$ to give lower bounds for $P_j(t)$. We also note that for $t \leqslant \frac{\varepsilon^6}{2}$, we have $P_1(t) + P_2(t) + P_3(t) \geqslant (\alpha_1 + \alpha_2 + \alpha_3) \varepsilon^6 = \varepsilon^6$ and $P_1(t) + P_2(t) + P_3(t) - t - \varepsilon^{10} \geqslant \frac{\varepsilon^6}{2} - \varepsilon^{10} > 0$. Hence we can assume that $t > \frac{\varepsilon^6}{2}$.

Looking at the slopes of $\mathcal{R}_{\alpha}(t)$ as in \cite{MPPTW}, we can prove some elementary lemmas that will be helpful in the proof of Theorem~\ref{t1} and Theorem~\ref{t2}.
\begin{lemma}\label{l22}
Suppose that $\alpha_1 + \alpha_2 + \alpha_3 = 1$ and $\alpha_1, \alpha_2, \alpha_3 \geqslant \varepsilon$. For any $\frac{1}{2} < \theta < 1$, suppose that there exists an integer $g \geqslant 1$ such that
$$
\frac{5(1 - \theta)}{6} \leqslant g \alpha_3 \leqslant \alpha_1 \leqslant \alpha_2 \leqslant 1 - \theta.
$$
Assume that
$$
(2 g - 1) \alpha_3 < (4 g - 1)(2 \theta - 1) + |\alpha_1 - \alpha_2| - \varepsilon^7.
$$
Then we have
$$
P_1(t) + P_2(t) + P_3(t) - t > \varepsilon^{10}
$$
for every $t \in [\frac{\varepsilon^6}{2},\ 1 - \theta]$.
\end{lemma}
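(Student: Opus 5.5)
The plan is to bound each $P_j$ from below by a single term of (9), and then to check the sign of a piecewise-linear function at its finitely many breakpoints. Write $\tau = 1-\theta$. By (9) we have
$$P_1(t) \geqslant \mathcal{R}_{\alpha_1}(t), \qquad P_2(t) \geqslant \mathcal{R}_{\alpha_2}(t), \qquad P_3(t) \geqslant \tfrac{1}{g}\mathcal{R}_{g\alpha_3}(t).$$
Here we use $k=1$ for $\alpha_1, \alpha_2$ and $k=g$ for $\alpha_3$; this is allowed because $g$ is fixed and $g \leqslant \lceil \varepsilon^{-7}\rceil$.

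By hypothesis all three of $\alpha_1$, $\alpha_2$ and $\beta := g\alpha_3$ lie in $[\frac{5\tau}{6}, \tau]$. So the explicit formula (8) puts each in its last regime. For such $\alpha$, $\mathcal{R}_\alpha(t)$ has three pieces:
\begin{itemize}
\item it equals $t/2$ for $t \leqslant \alpha - \tau/2$;
\item it equals $(t - \tau/2 + \alpha)/4$ for $\alpha - \tau/2 \leqslant t \leqslant 3\tau/2 - \alpha$;
\item it equals $(t-\tau+\alpha)/2$ for $t \geqslant 3\tau/2 - \alpha$.
\end{itemize}
Set
$$S(t) = \mathcal{R}_{\alpha_1}(t) + \mathcal{R}_{\alpha_2}(t) + \tfrac1g \mathcal{R}_{\beta}(t) - t.$$
This $S$ is continuous and piecewise linear on $[0,\tau]$. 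Its breakpoints are $\alpha - \tau/2$ and $3\tau/2 - \alpha$ for $\alpha \in \{\alpha_1, \alpha_2, \beta\}$. Since $\beta \leqslant \alpha_1 \leqslant \alpha_2$, these are ordered as
$$\beta - \tfrac{\tau}{2} \leqslant \alpha_1 - \tfrac{\tau}{2} \leqslant \alpha_2 - \tfrac{\tau}{2} \leqslant \tfrac{3\tau}{2} - \alpha_2 \leqslant \tfrac{3\tau}{2} - \alpha_1 \leqslant \tfrac{3\tau}{2} - \beta.$$
It then suffices to verify $S > \varepsilon^{10}$ at $t = \varepsilon^6/2$, at $t = \tau$, and at the breakpoints in between.

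The endpoints are easy. Near $0$ all three terms are $t/2$, so $S(t) = \frac{t}{2g} \geqslant \frac{\varepsilon^6}{4g} > \varepsilon^{10}$. At $t=\tau$ every term equals $\alpha/2$ (resp. $\beta/(2g) = \alpha_3/2$), so
$$S(\tau) = \tfrac12(\alpha_1+\alpha_2+\alpha_3) - \tau = \theta - \tfrac12 > 0.$$
Next I look at slopes. On the initial stretch the slope of $S$ is at most $1/(2g) \geqslant 0$ and at least a negative amount, so the minimum of $S$ must occur at one of the breakpoints. On the middle stretch, where all three terms are in their $1/4$-slope regime, $S$ has slope $\frac14 + \frac14 + \frac1{4g} - 1 < 0$, so $S$ decreases there. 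After that the slopes climb back up, towards $\frac12+\frac12+\frac1{2g}-1>0$ once every term is in its last regime. Hence the critical point is the end of the middle stretch, $t^* = \frac{3\tau}{2} - \alpha_2$, together possibly with $\frac{3\tau}{2} - \alpha_1$. The earlier breakpoints $\alpha_i - \tau/2$ need only a quick check, using $\alpha_i \geqslant \frac{5\tau}{6}$.

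At $t^*$ we have:
\begin{itemize}
\item $\mathcal{R}_{\alpha_2}(t^*) = \tau/4$;
\item $\mathcal{R}_{\alpha_1}(t^*) = (\tau - \alpha_2 + \alpha_1)/4$;
\item $\frac1g\mathcal{R}_\beta(t^*) = (\tau - \alpha_2 + \beta)/(4g)$.
\end{itemize}
Substituting $\alpha_1 + \alpha_2 = 1 - \alpha_3$ and $\beta = g\alpha_3$, and multiplying by $4g$, the inequality $S(t^*) > \varepsilon^{10}$ reduces to
$$(2g-1)\alpha_3 < (4g-1)(2\theta-1) + |\alpha_1-\alpha_2|,$$
up to the $\varepsilon$-loss. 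In the symmetric case $\alpha_1=\alpha_2$ this is exactly $4g\,S(t^*) = (4g-1)(\theta-\frac12) - (g-\frac12)\alpha_3$. The hypothesis, with its $\varepsilon^7$ margin, gives the required strict inequality with room for $\varepsilon^{10}$.

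The main obstacle is carrying out the asymmetric computation carefully. I will check that the contribution of $\alpha_2 - \alpha_1$ indeed enters with the favourable sign and coefficient matching $|\alpha_1-\alpha_2|$. I will also confirm that the other breakpoint $\frac{3\tau}{2}-\alpha_1$ gives a condition that is no stronger. There the $\alpha_2$-term has already switched to slope $1/2$, which should make $S$ larger, but this needs to be checked explicitly.
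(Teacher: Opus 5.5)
Your setup matches the paper's: you take $k=1,1,g$ in (10) and analyse the piecewise-linear $S$. However, you locate the minimum of $S$ in the wrong place, and the reduction you assert at $t^*=\frac{3\tau}{2}-\alpha_2$ is false.

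On $[\frac{3\tau}{2}-\alpha_2,\frac{3\tau}{2}-\alpha_1]$ only the $\alpha_2$-term has switched to slope $\frac12$. So the slope of $S$ there is $\frac12+\frac14+\frac1{4g}-1=-\frac14+\frac1{4g}\leqslant 0$. Thus $S$ keeps decreasing past $t^*$ (or is flat if $g=1$). It starts increasing only after $t_0=\frac{3\tau}{2}-\alpha_1$. The binding point is therefore $t_0$, not $t^*$, and your expectation that $t_0$ gives a ``no stronger'' condition is backwards. With $d=\alpha_2-\alpha_1$ one computes
\[
8g\,S(t^*)=(4g-1)(2\theta-1)-(2g-1)\alpha_3+(2g-1)d,\qquad 8g\,S(t_0)=(4g-1)(2\theta-1)-(2g-1)\alpha_3+d .
\]
At $t^*$ the term $|\alpha_1-\alpha_2|$ enters with coefficient $2g-1$, not $1$ as you claim. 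The lemma's hypothesis, with coefficient $1$, is exactly the condition $S(t_0)>\varepsilon^7/(8g)$. Checking $t^*$ alone does not prove the lemma when $g\geqslant 2$ and $\alpha_1<\alpha_2$. Your symmetric-case formula is correct only because $t^*=t_0$ when $d=0$.

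The fix is the paper's argument. Write down the slopes on all seven intervals cut out by your six ordered breakpoints:
\[
\tfrac1{2g},\quad \tfrac1{4g},\quad -\tfrac14+\tfrac1{4g},\quad -\tfrac12+\tfrac1{4g},\quad -\tfrac14+\tfrac1{4g},\quad \tfrac1{4g},\quad \tfrac1{2g}.
\]
Then $S$ is non-decreasing on $[\frac{\varepsilon^6}{2},\alpha_1-\frac\tau2]$, non-increasing on $[\alpha_1-\frac\tau2,\frac{3\tau}{2}-\alpha_1]$, and non-decreasing afterwards. So only two points need checking: $t=\frac{\varepsilon^6}{2}$, where $S=\frac{\varepsilon^6}{4g}$, and $t=t_0$, where the hypothesis gives $S(t_0)>\frac{\varepsilon^7}{8g}>\varepsilon^{10}$. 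The breakpoints $\alpha_i-\frac\tau2$ and the endpoint $t=\tau$ need no separate verification.
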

\begin{proof}
By (10), we have
$$
P_1(t) + P_2(t) + P_3(t) - t \geqslant \mathcal{R}_{\alpha_1}(t) + \mathcal{R}_{\alpha_2}(t) + \frac{1}{g} \mathcal{R}_{g \alpha_3}(t) - t = F_1(t).
$$
Since $g \alpha_3 \leqslant \alpha_1 \leqslant \alpha_2$, we have
$$
g \alpha_3 - \frac{1 - \theta}{2} \leqslant \alpha_1 - \frac{1 - \theta}{2} \leqslant \alpha_2 - \frac{1 - \theta}{2} \leqslant \frac{3(1 - \theta)}{2} - \alpha_2 \leqslant \frac{3(1 - \theta)}{2} - \alpha_1 \leqslant \frac{3(1 - \theta)}{2} - g \alpha_3.
$$
The slopes of the piecewise-linear function $F_1(t)$ in the 7 intervals, given by $\frac{\varepsilon^6}{2}$, $1 - \theta$ and the above 6 intermediate values, are
$$
\frac{1}{2 g}, \quad \frac{1}{4 g}, \quad -\frac{1}{4} + \frac{1}{4 g}, \quad -\frac{1}{2} + \frac{1}{4 g}, \quad -\frac{1}{4} + \frac{1}{4 g}, \quad \frac{1}{4 g}, \quad \frac{1}{2 g}
$$
respectively. Hence $F_1(t)$ is non-decreasing in $[\frac{\varepsilon^6}{2},\ \alpha_1 - \frac{1 - \theta}{2}]$, non-increasing in $[\alpha_1 - \frac{1 - \theta}{2},\ \frac{3(1 - \theta)}{2} - \alpha_1]$ and non-decreasing again in $[\frac{3(1 - \theta)}{2} - \alpha_1,\ 1 - \theta]$. Since 
\begin{align}
\nonumber F_1\left(\frac{\varepsilon^6}{2}\right) =&\ \mathcal{R}_{\alpha_1}\left(\frac{\varepsilon^6}{2}\right) + \mathcal{R}_{\alpha_2}\left(\frac{\varepsilon^6}{2}\right) + \frac{1}{g} \mathcal{R}_{g \alpha_3}\left(\frac{\varepsilon^6}{2}\right) - \frac{\varepsilon^6}{2} \\
\nonumber =&\ \left(\frac{1}{2} + \frac{1}{2} + \frac{1}{2 g} - 1 \right) \frac{\varepsilon^6}{2} = \frac{\varepsilon^6}{4 g} > \varepsilon^{10},
\end{align}
we only need to check the value of $F_1\left(\frac{3(1 - \theta)}{2} - \alpha_1 \right)$. Write $t_0 = \frac{3(1 - \theta)}{2} - \alpha_1$. Now,
\begin{align}
\nonumber F_1(t_0) =&\ \mathcal{R}_{\alpha_1}(t_0) + \mathcal{R}_{\alpha_2}(t_0) + \frac{1}{g} \mathcal{R}_{g \alpha_3}(t_0) - t_0 \\
\nonumber =&\ \frac{(4 g - 1)(2 \theta - 1) - (2 g - 1) \alpha_3 + (\alpha_2 - \alpha_1)}{8 g} \\
\nonumber =&\ \frac{(4 g - 1)(2 \theta - 1) - (2 g - 1) \alpha_3 + |\alpha_1 - \alpha_2|}{8 g} > \frac{\varepsilon^7}{8 g} > \varepsilon^{10},
\end{align}
since $\alpha_2 \geqslant \alpha_1$.
\end{proof}
\begin{lemma}\label{l23}
Suppose that $\alpha_1 + \alpha_2 + \alpha_3 = 1$ and $\alpha_1, \alpha_2, \alpha_3 \geqslant \varepsilon$. For any $\frac{1}{2} < \theta < 1$, define
\begin{align*}
w(\alpha) =
\begin{cases}
1 - \theta - \frac{2}{5} \alpha, & \quad \frac{5(1 - \theta)}{8} \leqslant \alpha \leqslant \frac{5(1 - \theta)}{6}, \\
\frac{3(1 - \theta)}{2} - \alpha, & \quad \frac{5(1 - \theta)}{6} \leqslant \alpha \leqslant 1 - \theta. \\
\end{cases}
\end{align*}
Suppose that
$$
\frac{5(1 - \theta)}{8} \leqslant \alpha_1 \leqslant \alpha_2 \leqslant 1 - \theta.
$$
Assume that there exists an integer $k \geqslant 2$ such that
$$
\frac{1}{k} \mathcal{R}_{k \alpha_3}(w(\alpha_1)) > \frac{\alpha_3 - (2 \theta - 1)}{2} + \varepsilon^7.
$$
Then we have
$$
P_1(t) + P_2(t) + P_3(t) - t > \varepsilon^{10}
$$
for every $t \in [\frac{\varepsilon^6}{2},\ 1 - \theta]$. Moreover, the same result holds if
$$
\alpha_1 \geqslant \frac{5(1 - \theta)}{8}, \quad \alpha_2 \geqslant 1 - \theta, \quad |\alpha_1 - \alpha_2| < 2 \theta - 1 \quad \text{and} \quad \alpha_3 \leqslant \frac{3(1 - \theta)}{4} - \varepsilon^5.
$$
\end{lemma}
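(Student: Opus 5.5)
The argument follows the proof of Lemma~\ref{l22}. By (10) we bound
$$
P_1(t)+P_2(t)+P_3(t)-t \geqslant \mathcal{R}_{\alpha_1}(t)+\mathcal{R}_{\alpha_2}(t)+\max\left(\mathcal{R}_{\alpha_3}(t),\ \tfrac{1}{k}\mathcal{R}_{k\alpha_3}(t)\right)-t = F(t),
$$
and then study the piecewise-linear function $G(t)=\mathcal{R}_{\alpha_1}(t)+\mathcal{R}_{\alpha_2}(t)-t$ using the explicit formula (9).

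\textbf{Shape of $G$.} For $\alpha\geqslant \frac{5(1-\theta)}{8}$, the slope of $\mathcal{R}_\alpha$ runs through $\frac12,\frac16,\frac14,\frac12$ on $[\frac{5(1-\theta)}{8},\frac{5(1-\theta)}{6}]$, and through $\frac12,\frac14,\frac12$ on the upper range. In both ranges the last breakpoint is exactly $w(\alpha)$. Consequently $G$ has slope $0$ on $[0,\alpha_1-\frac{1-\theta}{2}]$, is non-increasing on the middle range, and has slope $0$ again once $t\geqslant \max(w(\alpha_1),w(\alpha_2))$.

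Since $w$ is decreasing, $w(\alpha_1)\geqslant w(\alpha_2)$. Hence on $[w(\alpha_1),1-\theta]$ we have
$$
G(t)=\frac{t-(1-\theta)+\alpha_1}{2}+\frac{t-(1-\theta)+\alpha_2}{2}-t=\frac{\alpha_1+\alpha_2-2(1-\theta)}{2}=\frac{2\theta-1-\alpha_3}{2},
$$
using $\alpha_1+\alpha_2=1-\alpha_3$. So $G$ attains its minimum value $-\frac{\alpha_3-(2\theta-1)}{2}$ on $[w(\alpha_1),1-\theta]$, and $G\geqslant$ this value everywhere.

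\textbf{Closing the main case.} On $[w(\alpha_1),1-\theta]$ the function $\frac1k\mathcal{R}_{k\alpha_3}$ is non-decreasing, since $\mathcal{R}$ is non-decreasing in $t$. The hypothesis at $t=w(\alpha_1)$ then gives $F>\varepsilon^7>\varepsilon^{10}$ there.

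For $t<w(\alpha_1)$ the naive bound is not enough: $G$ is at least its minimum, but $\frac1k\mathcal{R}_{k\alpha_3}(t)$ could be smaller than at $w(\alpha_1)$. I would handle this by examining the slopes, as in Lemma~\ref{l22}. Where $G$ is strictly decreasing, its slope is at most about $-\frac14$ or $-\frac16$ in magnitude, while the slope of $\frac1k\mathcal{R}_{k\alpha_3}$ is at most $\frac{1}{2k}\leqslant\frac14$. I expect $F$ to be non-increasing on the middle stretch up to $w(\alpha_1)$. Then $F(t)\geqslant F(w(\alpha_1))$ there, and on the initial flat stretch $F(t)\geqslant F(\varepsilon^6/2)>0$, by the same computation as in Lemma~\ref{l22}.

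The main obstacle is the sub-interval between $w(\alpha_2)$ and $w(\alpha_1)$ in the case $\alpha_1<\frac{5(1-\theta)}{6}$. There the slope of $G$ can be $\frac12-\frac14-1$ or $\frac12+\frac14-1=-\frac14$, and against a possible $+\frac14$ slope from the $\alpha_3$ term it is only a tie, not a strict decrease. Even a tie is enough, since then $F$ is non-increasing. I would check this by listing all breakpoint orderings. Slope $\frac16$ segments of $\mathcal{R}_{\alpha_1}$ only help, since they make $G$ decrease faster.

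\textbf{The "moreover" case.} Here $\alpha_2\geqslant 1-\theta$, so $\mathcal{R}_{\alpha_2}(t)=t/2$ (with $\alpha_2$ at the boundary, the last case of (9) agrees). Then $G(t)=\mathcal{R}_{\alpha_1}(t)-t/2$, which is flat, then decreasing, then equal to $\frac{\alpha_1-(1-\theta)}{2}$ on $[w(\alpha_1),1-\theta]$. I would use $\mathcal{R}_{\alpha_3}$ itself, taking $k=1$ in (10). The conditions $|\alpha_1-\alpha_2|<2\theta-1$ and $\alpha_3\leqslant\frac{3(1-\theta)}{4}-\varepsilon^5$ should ensure that $\alpha_3\leqslant\frac{1-\theta}{2}$ forces $\mathcal{R}_{\alpha_3}(t)=\frac{t-(1-\theta)+\alpha_3}{2}$ near $t=1-\theta$. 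The endpoint value is then
$$
\frac{\alpha_1+\alpha_3-(1-\theta)}{2}=\frac{\theta-\alpha_2}{2},
$$
which is positive because $\alpha_2<\alpha_1+2\theta-1$ together with $\alpha_1+\alpha_2<1$ gives $\alpha_2<\theta$. Re-running the slope analysis on the earlier intervals, with the $\varepsilon^5$ margin giving strict positivity at the interior breakpoints $w(\alpha_1)$ and $1-\theta-2\alpha_3$, completes the argument. The one thing I have not confirmed is that $\frac{3(1-\theta)}{4}$ is exactly the threshold at which the minimum at $t=\alpha_1-\frac{1-\theta}{2}$ stays positive.
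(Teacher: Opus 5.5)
There are two genuine gaps. Your treatment of the middle and final stretches of the main case is correct. On $[\alpha_1-\frac{1-\theta}{2},w(\alpha_1)]$ the slope of $\mathcal{R}_{\alpha_1}$ is at most $\frac14$, that of $\mathcal{R}_{\alpha_2}$ at most $\frac12$, and that of $\frac1k\mathcal{R}_{k\alpha_3}$ at most $\frac1{2k}\le\frac14$. One small correction there: run the monotonicity argument on the fixed-$k$ function, not on your $\max$ with $\mathcal{R}_{\alpha_3}$, since $\mathcal{R}_{\alpha_3}$ can have slope $\frac12$.

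\textbf{The initial flat stretch of the main case.} On $[\frac{\varepsilon^6}{2},\alpha_1-\frac{1-\theta}{2}]$ you have $\mathcal{R}_{\alpha_1}+\mathcal{R}_{\alpha_2}-t=0$, and ``the same computation as in Lemma~\ref{l22}'' is not available. There one had $g\alpha_3\ge\frac{5(1-\theta)}{6}$; here nothing prevents $k\alpha_3<\frac{1-\theta}{2}$, in which case both $\mathcal{R}_{k\alpha_3}$ and $\mathcal{R}_{\alpha_3}$ vanish for small $t$. Concretely, take $\theta=0.52$, $\alpha_1=\alpha_2=0.475$, $\alpha_3=0.05$, $k=4$. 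The hypothesis holds: $w(\alpha_1)=0.245$ and $\frac14\mathcal{R}_{0.2}(0.245)=0.006875>0.005+\varepsilon^7$. Yet your $F$ is identically $0$ on $[\frac{\varepsilon^6}{2},0.08]$. The fix, which is what the paper does, is to use (10) with a different, large index on this stretch, e.g.\ $k'=\lceil\frac{1-\theta}{\varepsilon}\rceil$. Then $k'\alpha_3\ge 1-\theta$, so $\mathcal{R}_{k'\alpha_3}(t)=\frac t2$ and $P_3(t)\ge\frac{t}{2k'}>\varepsilon^{10}$. The same device is needed in the ``moreover'' case when $\alpha_1\ge 1-\theta$.

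\textbf{The ``moreover'' case with $k=1$.} This is the more serious gap: the approach fails outright. Take $\theta=0.52$, $\alpha_1=0.46$, $\alpha_2=0.49$, $\alpha_3=0.05$, which satisfies all four conditions. At $t=w(\alpha_1)=0.26$:
\begin{itemize}
\item $\mathcal{R}_{\alpha_1}(t)=0.12$ and $\mathcal{R}_{\alpha_2}(t)=0.13$;
\item $\mathcal{R}_{\alpha_3}(t)=0$, since $t<1-\theta-2\alpha_3=0.38$;
\item hence $\mathcal{R}_{\alpha_1}+\mathcal{R}_{\alpha_2}+\mathcal{R}_{\alpha_3}-t=-0.01$.
\end{itemize}
Positivity at the endpoint $t=1-\theta$ says nothing about this interior minimum. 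With $k=1$ you also lose the slope bound $\frac1{2k}\le\frac14$ that makes the middle stretch non-increasing.

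What is missing is a choice of $k\ge 2$ adapted to $\alpha_3$; in the example, $k=10$ gives $\frac1{10}\mathcal{R}_{0.5}(0.26)=0.013>0.01$. The paper's route is as follows.
\begin{enumerate}
\item Run the main-case slope argument with some $k\ge2$ to reduce to $G(\alpha_1)=\frac{\alpha_1-(1-\theta)}{2}+\frac1k\mathcal{R}_{k\alpha_3}(w(\alpha_1))$.
\item Observe that $G$ is increasing in $\alpha_1$. Since $w$ has slope $-\frac25$ or $-1$, one gets $G(z_2)-G(z_1)\ge\frac{k-1}{2k}(z_2-z_1)$.
\item Since $|\alpha_1-\alpha_2|<2\theta-1$ forces $\alpha_1>1-\theta-\frac{\alpha_3}{2}$, evaluate $G$ at that point.
\item If $\alpha_3<\frac{1-\theta}{3}$, one of $k=l$ or $k=l+1$, with $l=\lfloor\frac{1-\theta}{\alpha_3}\rfloor$, gives $G\ge\frac{\alpha_3}{4(2l+1)}$. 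If $\frac{1-\theta}{3}\le\alpha_3\le\frac{3(1-\theta)}{4}-\varepsilon^5$, then $k=2$ works.
\end{enumerate}
This also answers your open question. The threshold $\frac{3(1-\theta)}{4}$ has nothing to do with $t=\alpha_1-\frac{1-\theta}{2}$. It is exactly where the cap $\mathcal{R}_{2\alpha_3}(w)\le\frac w2$, with $w=\frac{3(1-\theta)+\alpha_3}{5}$, stops exceeding $\frac{\alpha_3}{2}$.
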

\begin{proof}
Assume first that $\frac{5(1 - \theta)}{8} \leqslant \alpha_1 \leqslant \alpha_2 \leqslant 1 - \theta$. By (10), we have
$$
P_1(t) + P_2(t) + P_3(t) - t \geqslant \mathcal{R}_{\alpha_1}(t) + \mathcal{R}_{\alpha_2}(t) + \frac{1}{k} \mathcal{R}_{k \alpha_3}(t) - t = F_2(t).
$$
For $t \in [\frac{\varepsilon^6}{2},\ \alpha_1 - \frac{1 - \theta}{2}]$, we have $\mathcal{R}_{\alpha_1}(t) = \mathcal{R}_{\alpha_2}(t) = \frac{t}{2}$, and by (10) we know that
$$
P_3(t) \geqslant \frac{1}{\lceil \frac{1 - \theta}{\varepsilon} \rceil} \mathcal{R}_{\lceil \frac{1 - \theta}{\varepsilon} \rceil \alpha_3}(t) = \frac{t}{2 \lceil \frac{1 - \theta}{\varepsilon} \rceil} > \varepsilon^8 > \varepsilon^{10}
$$
since $\alpha_3 \geqslant \varepsilon$. Hence $P_1(t) + P_2(t) + P_3(t) - t \geqslant \varepsilon^{10}$ for $t \in [\frac{\varepsilon^6}{2},\ \alpha_1 - \frac{1 - \theta}{2}]$. For $t \in [\alpha_1 - \frac{1 - \theta}{2},\ w(\alpha_1)]$, the slope of $\mathcal{R}_{\alpha_1}(t)$ is $\leqslant \frac{1}{4}$, the slope of $\mathcal{R}_{\alpha_2}(t)$ is $\leqslant \frac{1}{2}$, and the slope of $\frac{1}{k} \mathcal{R}_{k \alpha_3}(t)$ is $\leqslant \frac{1}{2 k} \leqslant \frac{1}{4}$, hence $F_2(t)$ is non-increasing in $[\alpha_1 - \frac{1 - \theta}{2},\ w(\alpha_1)]$. For $t \in [w(\alpha_1),\ 1 - \theta]$, both $\mathcal{R}_{\alpha_1}(t)$ and $\mathcal{R}_{\alpha_2}(t)$ have slope $\frac{1}{2}$, hence $F_2(t)$ is non-decreasing in $[w(\alpha_1),\ 1 - \theta]$. Thus, we only need to check the value of $F_2(w(\alpha_1))$. Now,
\begin{align}
\nonumber F_2(w(\alpha_1)) =&\ \mathcal{R}_{\alpha_1}(w(\alpha_1)) + \mathcal{R}_{\alpha_2}(w(\alpha_1)) + \frac{1}{k} \mathcal{R}_{k \alpha_3}(w(\alpha_1)) - w(\alpha_1) \\
\nonumber =&\ w(\alpha_1) - (1 - \theta) + \frac{\alpha_1 + \alpha_2}{2} + \frac{1}{k} \mathcal{R}_{k \alpha_3}(w(\alpha_1)) - w(\alpha_1) \\
\nonumber =&\ \frac{(2 \theta - 1) - \alpha_3}{2} + \frac{1}{k} \mathcal{R}_{k \alpha_3}(w(\alpha_1)) > \varepsilon^7 > \varepsilon^{10}
\end{align}
by (9) and our assumption. Now the proof of the first part of Lemma~\ref{l23} is completed.

Assume next that $\alpha_2 \geqslant 1 - \theta$. If $\alpha_1 \geqslant 1 - \theta$, then we have $\mathcal{R}_{\alpha_1}(t) = \mathcal{R}_{\alpha_2}(t) = \frac{t}{2}$, and by (10) we know that
$$
P_3(t) \geqslant \frac{1}{\lceil \frac{1 - \theta}{\varepsilon} \rceil} \mathcal{R}_{\lceil \frac{1 - \theta}{\varepsilon} \rceil \alpha_3}(t) = \frac{t}{2 \lceil \frac{1 - \theta}{\varepsilon} \rceil} > \varepsilon^8 > \varepsilon^{10}
$$
since $\alpha_3 \geqslant \varepsilon$. Hence $P_1(t) + P_2(t) + P_3(t) - t \geqslant \varepsilon^{10}$ in this case. Now we can assume that $\alpha_1 < 1 - \theta$. Since $|\alpha_1 - \alpha_2| < 2 \theta - 1$ and $\alpha_3 \leqslant \frac{3(1 - \theta)}{4}$, we have
$$
\alpha_1 = \frac{1 - \alpha_3 - |\alpha_1 - \alpha_2|}{2} > \frac{1 - \alpha_3 - 2 \theta + 1}{2} = 1 - \theta - \frac{\alpha_3}{2} \geqslant \frac{5(1 - \theta)}{8}.
$$
By the above discussions, we only need to check the value of $F_2(w(\alpha_1))$. Now,
\begin{align}
\nonumber F_2(w(\alpha_1)) =&\ \mathcal{R}_{\alpha_1}(w(\alpha_1)) + \mathcal{R}_{\alpha_2}(w(\alpha_1)) + \frac{1}{k} \mathcal{R}_{k \alpha_3}(w(\alpha_1)) - w(\alpha_1) \\
\nonumber =&\ \frac{w(\alpha_1) - (1 - \theta) + \alpha_1}{2} + \frac{w(\alpha_1)}{2} + \frac{1}{k} \mathcal{R}_{k \alpha_3}(w(\alpha_1)) - w(\alpha_1) \\
\nonumber =&\ \frac{\alpha_1 - (1 - \theta)}{2} + \frac{1}{k} \mathcal{R}_{k \alpha_3}(w(\alpha_1)) = G(\alpha_1).
\end{align}
Note that $w(\alpha)$ has a slope $-\frac{2}{5}$ or $-1$, for fixed $k, \theta, \alpha_3$ and any $1 - \theta - \frac{\alpha_3}{2} \leqslant z_1 < z_2 \leqslant 1 - \theta$ we have
$$
G(z_2) - G(z_1) \geqslant \frac{z_2 - z_1}{2} - \frac{w(z_1) - w(z_2)}{2 k} \geqslant \frac{k - 1}{2 k} (z_2 - z_1) > 0.
$$
Now we only need to find an integer $k \geqslant 2$ such that $G\left(1 - \theta - \frac{\alpha_3}{2} \right) > \varepsilon^7$. We consider the following 3 cases:

(1). $\varepsilon \leqslant \alpha_3 < \frac{1 - \theta}{3}$. Now $1 - \theta - \frac{\alpha_3}{2} > \frac{5(1 - \theta)}{6}$, and
$$
w\left(1 - \theta - \frac{\alpha_3}{2} \right) = \frac{3(1 - \theta)}{2} - \left(1 - \theta - \frac{\alpha_3}{2} \right) = \frac{1 - \theta + \alpha_3}{2}.
$$
Write $l = \lfloor\frac{1 - \theta}{\alpha_3} \rfloor$. Taking $k = l \geqslant 3$. Then
\begin{align}
\nonumber G\left(1 - \theta - \frac{\alpha_3}{2} \right) \geqslant&\ - \frac{\alpha_3}{4} + \frac{w\left(1 - \theta - \frac{\alpha_3}{2} \right) - (1 - \theta) + l \alpha_3}{2 l} \\
\nonumber =&\ - \frac{\alpha_3}{4} + \frac{1 - \theta + \alpha_3 - 2 (1 - \theta - l \alpha_3)}{4 l} \\
\nonumber \geqslant&\ \frac{- l \alpha_3 + 1 - \theta + \alpha_3 - 2 (1 - \theta - l \alpha_3)}{4 l} \\
\nonumber =&\ \frac{\alpha_3 - (1 - \theta - l \alpha_3)}{4 l}.
\end{align}
Taking $k = l + 1 \geqslant 4$. Since $l + 1 \geqslant \lceil\frac{1 - \theta}{\alpha_3} \rceil$, we have $\mathcal{R}_{k \alpha_3}\left(w\left(1 - \theta - \frac{\alpha_3}{2} \right)\right) = \frac{w\left(1 - \theta - \frac{\alpha_3}{2} \right)}{2}$. Then
\begin{align}
\nonumber G\left(1 - \theta - \frac{\alpha_3}{2} \right) =&\ - \frac{\alpha_3}{4} + \frac{1 - \theta + \alpha_3}{4 (l + 1)} \\
\nonumber =&\ \frac{1 - \theta - l \alpha_3}{4 (l + 1)}.
\end{align}
Combining the two lower bounds, we have
\begin{align}
\nonumber G\left(1 - \theta - \frac{\alpha_3}{2} \right) \geqslant&\ \frac{1}{2 l + 1}\left(l \frac{\alpha_3 - (1 - \theta - l \alpha_3)}{4 l} + (l + 1) \frac{1 - \theta - l \alpha_3}{4 (l + 1)} \right) \\
\nonumber =&\ \frac{\alpha_3}{4(2 l + 1)}.
\end{align}
Since $l \leqslant \frac{1 - \theta}{\alpha_3}$, we have
$$
G\left(1 - \theta - \frac{\alpha_3}{2} \right) \geqslant \frac{\alpha_3}{4(2 l + 1)} \geqslant \frac{\alpha_3^2}{4(2 (1 - \theta) + \alpha_3)} > \frac{\alpha_3^2}{12 (1 - \theta)} > \varepsilon^3 > \varepsilon^7.
$$

(2). $\frac{1 - \theta}{3} \leqslant \alpha_3 < \frac{1 - \theta}{2}$. Now $\frac{3(1 - \theta)}{4} < 1 - \theta - \frac{\alpha_3}{2} \leqslant \frac{5(1 - \theta)}{6}$, and
$$
w\left(1 - \theta - \frac{\alpha_3}{2} \right) = 1 - \theta - \frac{2}{5}\left(1 - \theta - \frac{\alpha_3}{2} \right) = \frac{3(1 - \theta) + \alpha_3}{5}.
$$
Since
$$
G\left(1 - \theta - \frac{\alpha_3}{2} \right) \geqslant - \frac{\alpha_3}{4} + \frac{1}{2} \mathcal{R}_{2 \alpha_3}\left(w\left(1 - \theta - \frac{\alpha_3}{2} \right)\right),
$$
we only need to prove that
$$
\mathcal{R}_{2 \alpha_3}\left(w\left(1 - \theta - \frac{\alpha_3}{2} \right)\right) > \frac{\alpha_3}{2} + \varepsilon^6.
$$
By (9), this follows from the 3 conditions:
$$
\frac{w\left(1 - \theta - \frac{\alpha_3}{2} \right)}{2} - \frac{\alpha_3}{2} = \frac{3 (1 - \theta) - 4 \alpha_3}{10} > \varepsilon^6,
$$
$$
\frac{w\left(1 - \theta - \frac{\alpha_3}{2} \right) - (1 - \theta) + \frac{16}{5} \alpha_3}{4} - \frac{\alpha_3}{2} = \frac{7 \alpha_3 - 2 (1 - \theta)}{20} > \varepsilon^6,
$$
$$
\frac{w\left(1 - \theta - \frac{\alpha_3}{2} \right) - \frac{1 - \theta}{2} + 2 \alpha_3}{4} - \frac{\alpha_3}{2} = \frac{(1 - \theta) + 2 \alpha_3}{40} > \varepsilon^6,
$$
since $\frac{2 (1 - \theta)}{7} + \varepsilon^5 < \frac{1 - \theta}{3} \leqslant \alpha_3 < \frac{1 - \theta}{2} < \frac{3(1 - \theta)}{4} - \varepsilon^5$.

(3). $\frac{1 - \theta}{2} \leqslant \alpha_3 \leqslant \frac{3 (1 - \theta)}{4} - \varepsilon^5$. Now $\frac{5(1 - \theta)}{8} < 1 - \theta - \frac{\alpha_3}{2} \leqslant \frac{3(1 - \theta)}{4}$, and
$$
w\left(1 - \theta - \frac{\alpha_3}{2} \right) = 1 - \theta - \frac{2}{5}\left(1 - \theta - \frac{\alpha_3}{2} \right) = \frac{3(1 - \theta) + \alpha_3}{5}.
$$
Now we have
$$
\mathcal{R}_{2 \alpha_3}\left(w\left(1 - \theta - \frac{\alpha_3}{2} \right)\right) - \frac{\alpha_3}{2} = \frac{w\left(1 - \theta - \frac{\alpha_3}{2} \right)}{2} - \frac{\alpha_3}{2} = \frac{3 (1 - \theta) - 4 \alpha_3}{10} > \varepsilon^6.
$$

Combining the 3 cases, the proof of the second part of Lemma~\ref{l23} is completed.
\end{proof}

\section{Proof of Theorem 1 and Theorem 2}
The $\gamma^{1}$ part of Theorem~\ref{t1} follows directly from [\cite{BHP}, Lemma 9]. We first prove the $\gamma^{2}$ part of Theorem~\ref{t1}. Since we have
$$
\frac{2 g_2 - 1}{8 g_2^2 + 4 g_2 - 2} < \frac{1}{4 g_2 + 4}
$$
for all $g_2 \in \mathbb{N}^{+}$, from $2 \theta - 1 \in I_{g_2}$ we know that
\begin{equation}
\frac{1}{4 g_2 + 6} \leqslant 2 \theta - 1 \leqslant \frac{1}{4 g_2 + 4}.
\end{equation}
Since we have
\begin{align}
\nonumber 2 \theta - 1 \leqslant \frac{1}{4 g_2 + 4} \implies 4 \theta - 2 \leqslant&\ \frac{(4 g_2 + 2)(2 \theta - 1) + 1}{4 g_2 + 3} \\
=&\ \frac{(8(g_2 + 1) - 4) \theta - (4(g_2 + 1) - 3)}{4(g_2 + 1) - 1}
\end{align}
and
\begin{align}
\nonumber 2 \theta - 1 \geqslant \frac{1}{4 g_2 + 6} \implies 4 \theta - 2 \leqslant&\ \frac{12(g_2 + 1)(2 \theta - 1) - 1}{4 g_2 + 3} \\
=&\ \frac{(24(g_2 + 1) \theta - (12(g_2 + 1) + 1)}{4(g_2 + 1) - 1},
\end{align}
choosing $g = g_2 + 1$ in [\cite{BHP}, Lemma 9] gives Theorem~\ref{t1} with all $\alpha_3 \leqslant 4 \theta - 2$. Thus, we can assume that $4 \theta - 2 < \alpha_3 < \frac{(4 g_2 - 1)(2 \theta - 1)}{2 g_2 - 1}$. Without loss of generality, we suppose that $\alpha_1 \leqslant \alpha_2$, while the case $\alpha_2 \leqslant \alpha_1$ follows immediately by reversing the roles of $\alpha_1$ and $\alpha_2$. Since $|\alpha_1 - \alpha_2| < 2 \theta - 1$ and $\alpha_3 > 4 \theta - 2$, we have
\begin{equation}
\alpha_2 = \frac{1 - \alpha_3 - \alpha_1 + \alpha_2}{2} = \frac{1 - \alpha_3 + |\alpha_1 - \alpha_2|}{2} < \frac{1 - (2 \theta - 1)}{2} = 1 - \theta.
\end{equation}
Since $2 \theta - 1 \geqslant \frac{5}{24 g_2 + 5}$ and $\alpha_3 > 4 \theta - 2$, we have
\begin{equation}
g_2 \alpha_3 > g_2 (4 \theta - 2) \geqslant \frac{5(1 - \theta)}{6}.
\end{equation}
Since $2 \theta - 1 \leqslant \frac{2 g_2 - 1}{8 g_2^2 + 4 g_2 - 2}$, we have
\begin{equation}
\frac{(2 g_2 + 1)(4 g_2 - 1)(2 \theta - 1)}{2 g_2 - 1} + (2 \theta - 1) \leqslant 1.
\end{equation}
By $\alpha_3 < \frac{(4 g_2 - 1)(2 \theta - 1)}{2 g_2 - 1}$ and $|\alpha_1 - \alpha_2| < 2 \theta - 1$, (16) yields
\begin{equation}
(2 g_2 + 1) \alpha_3 + |\alpha_1 - \alpha_2| < 1 \implies g_2 \alpha_3 < \alpha_1.
\end{equation}
Now, by (14), (15), (17) and the assumption $\alpha_1 \leqslant \alpha_2$, we get
\begin{equation}
\frac{5(1 - \theta)}{6} \leqslant g_2 \alpha_3 < \alpha_1 \leqslant \alpha_2 < 1 - \theta.
\end{equation}
Now Lemma~\ref{l22} finishes the proof of the $\gamma^{2}$ part of Theorem~\ref{t1}, since
\begin{equation}
(2 g_2 - 1) \alpha_3 < (4 g_2 - 1)(2 \theta - 1) - \varepsilon^7 \leqslant (4 g_2 - 1)(2 \theta - 1) + |\alpha_1 - \alpha_2| - \varepsilon^7.
\end{equation}

Next we shall prove Theorem~\ref{t2}. Assume that $\alpha_1 \leqslant \alpha_2$ as before. If the condition (i) holds, then we have
\begin{equation}
\alpha_1 = \frac{1 - \alpha_3 - |\alpha_1 - \alpha_2|}{2} > \frac{1 - \alpha_3 - 2 \theta + 1}{2} = 1 - \theta - \frac{\alpha_3}{2} \geqslant \frac{5(1 - \theta)}{8},
\end{equation}
and Case (i) of Theorem~\ref{t2} follows easily by Lemma~\ref{l23}.

Now we can assume that $\frac{5(1 - \theta)}{8} \leqslant \alpha_1 \leqslant \alpha_2 \leqslant 1 - \theta$. Hence $\alpha_3 \geqslant 2 \theta - 1 + |\alpha_1 - \alpha_2| \geqslant 2 \theta - 1$.
By Lemma~\ref{l23}, we only need to show that for some integer $k \geqslant 2$,
\begin{equation}
\mathcal{R}_{k \alpha_3}(w) > \frac{k(\alpha_3 - (2 \theta - 1))}{2} + \varepsilon^7,
\end{equation}
where
\begin{align*}
w =
\begin{cases}
w_{1} = 1 - \theta - \frac{1 - \alpha_3 - |\alpha_1 - \alpha_2|}{5}, & \quad \frac{5(1 - \theta)}{8} \leqslant \alpha_1 \leqslant \frac{5(1 - \theta)}{6}, \\
w_{2} = \frac{3(1 - \theta) - 1 + \alpha_3 + |\alpha_1 - \alpha_2|}{2}, & \quad \frac{5(1 - \theta)}{6} \leqslant \alpha_1 \leqslant 1 - \theta. \\
\end{cases}
\end{align*}
Note that $w_{1}$ corresponds to Case (iii) and $w_{2}$ corresponds to Case (ii). Since $k(\alpha_3 - (2 \theta - 1)) \geqslant 0$ and by (8), the condition (21) becomes
\begin{equation}
\frac{w}{2} > \frac{k(\alpha_3 - (2 \theta - 1))}{2} + \varepsilon^7, \quad \text{and one of the following 3 conditions holds:}
\end{equation}
\begin{equation}
\frac{w - (1 - \theta) + 2 k \alpha_3}{6} > \frac{k(\alpha_3 - (2 \theta - 1))}{2} + \varepsilon^7,
\end{equation}
\begin{equation}
\frac{w - (1 - \theta) + k \alpha_3}{2} > \frac{k(\alpha_3 - (2 \theta - 1))}{2} + \varepsilon^7,
\end{equation}
\begin{equation}
\frac{w - (1 - \theta) + \frac{8}{5} k \alpha_3}{4} \quad \text{and} \quad \frac{w - \frac{1 - \theta}{2} + k \alpha_3}{4} > \frac{k(\alpha_3 - (2 \theta - 1))}{2} + \varepsilon^7.
\end{equation}
Solving the inequalities (22)--(25) with $w = w_1$ and $w = w_2$, we get Case (ii) and Case (iii) of Theorem~\ref{t2} with $k$ replaced by $g$. Note that when $k \alpha_3 \geqslant 1 - \theta$, we have $\frac{1}{k} \mathcal{R}_{k \alpha_3}(t) = \frac{t}{2 k}$ decreases as $k$ increases, hence $k > \max\left(2, \lceil \frac{1 - \theta}{2 \theta - 1} \rceil\right)$ will not give any better results.

Finally, we show the $\gamma^{3}$ part of Theorem~\ref{t1}. Assume that $\alpha_1 \leqslant \alpha_2$ as before. Suppose that $\alpha_3 \leqslant \frac{3(1 - \theta)}{4} - \varepsilon^5$, and thus (20) holds. If $\alpha_2 \geqslant 1 - \theta$, then Case (i) of Theorem~\ref{t2} gives the desired result. Hence, we assume that $\alpha_2 < 1 - \theta$. The condition 
$$
\frac{5(1 - \theta)}{8} \leqslant \alpha_1 \leqslant \alpha_2 \leqslant 1 - \theta
$$
in Lemma~\ref{l23} is now satisfied, and we only need to show that (21) holds for some integer $k \geqslant 2$. Assume further that $\alpha_1 \geqslant \frac{5 (1 - \theta)}{6}$, which is equivalent to $\alpha_3 + |\alpha_1 - \alpha_2| \leqslant \frac{5 \theta - 2}{3}$. Write $d = |\alpha_1 - \alpha_2|$.

When $d = 0$, this reduces to $\alpha_3 \leqslant \frac{5 \theta - 2}{3}$. Now we can prove (21) under the assumption that (22) and (25) holds with $w = w_2$, and the 3 inequalities in (22) and (25) reduce to the 3 terms in $\gamma^{3}_{g_3}(\theta)$ with $k = g_3$. Write $t_1 = \frac{1 - \theta}{2 \theta - 1}$ and take
\begin{equation}
k = \max\left(2,\ \left\lfloor \frac{5(t_1 - 1)}{12} \right\rfloor + 1 \right).
\end{equation}
Since $t_1 > 3$ for any $\frac{1}{2} < \theta < \frac{4}{7}$, we have
\begin{equation}
\frac{5 t_1 - 5}{12} < k < \frac{t_1 + 1}{2}.
\end{equation}
Putting (26) into (22) and (25), one can easily deduce that $\gamma^{3}_{g_3}(\theta) \geqslant 4 \theta - 2 - \varepsilon$ for any $\frac{1}{2} < \theta < \frac{4}{7}$ by the condition (27) for $k$. Moreover, it is easy to verify that
\begin{equation}
\frac{3(1 - \theta)}{4} \geqslant \gamma^{3}_{g_3}(\theta) \quad \text{and} \quad \frac{5 \theta - 2}{3} \geqslant \gamma^{3}_{g_3}(\theta)
\end{equation}
for any $\frac{1}{2} < \theta < \frac{4}{7}$.

When $d > 0$, we can write $\alpha_1 = \frac{1 - \alpha_3 - d}{2}$ and $\alpha_2 = \frac{1 - \alpha_3 + d}{2}$. Note that the function $w(\alpha)$ has a slope $-\frac{2}{5}$ or $-1$, we have
\begin{equation}
w\left(\frac{1 - \alpha_3 - d}{2} \right) \geqslant w\left(\frac{1 - \alpha_3}{2} \right).
\end{equation}
Since $\mathcal{R}_{k \alpha_3}(t)$ is non-decreasing in $[\frac{\varepsilon^6}{2},\ 1 - \theta]$, by (29) we have
\begin{equation}
\mathcal{R}_{k \alpha_3}\left(w\left(\frac{1 - \alpha_3 - d}{2} \right) \right) \geqslant \mathcal{R}_{k \alpha_3}\left(w\left(\frac{1 - \alpha_3}{2} \right) \right)
\end{equation}
for all $k \geqslant 2$. Thus, if $\mathcal{R}_{k \alpha_3}(w)$ satisfies (21) for $d = 0$, then it also satisfies (21) for all $0 < d < 2 \theta - 1$. By the discussions in the $d = 0$ case, the proof of the $\gamma^{3}$ part of Theorem~\ref{t1} is now completed.

\section*{AI declaration}
ChatGPT-6 Astra was used in some numerical calculations and verifications in the proof of Theorem~\ref{t1} and Theorem~\ref{t2}.

\bibliographystyle{plain}
\bibliography{bib}
\end{document}